\numberwithin{equation}{section}
\theoremstyle{definition}
\newtheorem{definition}{Definition}[section]
\newtheorem{remark}[definition]{Remark}
\theoremstyle{plain}
\newtheorem{theorem}[definition]{Theorem}
\newtheorem{lemma}[definition]{Lemma}
\newtheorem{corollary}[definition]{Corollary}
\newtheorem{result}[definition]{Result}
\newcommand{\al}{\alpha}
\newcommand\ba[1]{\overline{#1}}
\newcommand\hull[1]{\widehat{#1}}
\newcommand{\bdy}{\partial}
\newcommand{\ball}{\mathbb{B}}
\newcommand{\disc}{\mathbb{D}}
\newcommand{\smoo}{\mathcal{C}}
\newcommand{\hol}{\mathcal{O}}
\newcommand{\poly}{\mathscr{P}}
\newcommand{\rl}{\Re\mathfrak{e}}
\newcommand{\imag}{\Im\mathfrak{m}}
\newcommand{\cplx}{\mathbb{C}}
\newcommand{\rea}{\mathbb{R}}
\begin{document}

\title[Some observation concerning polynomial convexity]{Some observations concerning polynomial convexity}
\author{Sushil Gorai}
\address{Department of Mathematics and Statistics, Indian Institute of Science Education and Research Kolkata,
Mohanpur, Nadia, West Bengal 741246, India}
\email{sushil.gorai@iiserkol.ac.in}

\keywords{Polynomial convexity; closed ball; totally real; Lagrangians}
\subjclass[2010]{Primary: 32E20}
\thanks{This work is partially supported by an INSPIRE Faculty Fellowship (IFA-11MA-02) funded by DST and is also supported by a research 
grant under MATRICS scheme (MTR/2017/000974)}

\begin{abstract}
In this paper we discuss a couple of observations related to polynomial convexity. More precisely, 
\begin{itemize}
\item[(i)] We observe that the union of finitely many disjoint closed balls with centres in $\bigcup_{\theta\in[0,\pi/2]}e^{i\theta}V$ is polynomially convex, where $V$ 
is a Lagrangian subspace of $\cplx^n$. 
\item[(ii)] We show that any compact subset $K$ of $\{(z,w)\in\cplx^2:q(w)=\ba{p(z)}\}$, where $p$ and $q$ are two non-constant holomorphic polynomials 
in one variable, is polynomially convex and $\poly(K)=\smoo(K)$.
\end{itemize}
\end{abstract}

\maketitle

\section{Introduction}\label{S:intro}
For a compact set $K\subset\cplx^n$ the {\em polynomially convex hull} is defined by 
\[
\hull{K}:= \left\{z\in \cplx^n : |p(z)|\leq \sup_{K}|p|,\, p\in \cplx [z_1, \dots, z_n] \right\}.
\]
 $K$ is said to be {\em polynomially convex} if $\hull{K}=K$. Similarly, we define {\em rationally convex hull} of a compact 
 set $K\subset\cplx^n$ as 
 \[
 \hull{K}_R:=\left\{z\in\cplx^n: |f(z)|\leq \sup_K |f|,\, f\;\text{is a rational function}\right\}.
 \]
 $K$ is said to be {\em rationally convex} if $\hull{K}_R=K$. We note that $K\subset \hull{K}_R\subset\hull{K}$.
 Any compact convex subset of $\cplx^n$, $n\geq 1$, is polynomially convex. Thanks to Runge's approximation theorem, 
 any compact subset of $\cplx$ is rationally convex.
 A compact subset $K\subset\cplx$ is polynomially convex if and only if $\cplx\setminus K$ is connected. 
 Hence, in $\cplx$, polynomial convexity becomes a purely topological property on the compact set; of course, the reason 
is the very deep interconnections between topology and complex analysis in one variable. In $\cplx^n$, $n\geq 2$, it is not a topological 
property. In fact, there exist two compact subsets in $\cplx^2$, which are homeomorphic, 
but one of them is polynomially convex and the other is not. For instance, consider the unit circle placed in $\rea^2\subset\cplx^2$ and 
in $\cplx\times\{0\}\subset\cplx^2$. The first circle is polynomially convex while the later is not. 
Polynomial convexity is very closely related with
polynomial approximation. Below we mention a theorem that exhibit such a connection (see Stout's book\cite{stout} for more on these). 
\smallskip

\begin{theorem}[Oka-Weil]
Let $K\subset\cplx^n$ be a compact polynomially convex. Then any function that is holomorphic 
in a neighborhood of $K$ can be approximated uniformly on $K$ by polynomials in $z_1,\dots, z_n$.
\end{theorem}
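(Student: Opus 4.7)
The plan is to reduce polynomial approximation on $K$ to approximation on a polynomial polyhedron containing $K$, and thence to approximation on a polydisc via Taylor series.

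First, I would exploit polynomial convexity to produce, for any open neighborhood $U$ of $K$ on which the given holomorphic function $f$ is defined, a \emph{polynomial polyhedron}
\[
P = \{z \in \cplx^n : |p_j(z)| \leq 1,\ j = 1, \dots, N\}
\]
with $K \subset \mathrm{int}(P) \subset P \subset U$. The construction is a standard compactness argument: intersecting $\cplx^n \setminus U$ with a large closed ball containing $K$ yields a compact set disjoint from $\hull{K} = K$, and each of its points is separated from $K$ by some polynomial; a finite subcover together with rescaling and raising to sufficiently high powers produces the required $p_j$'s.

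Second, consider the graph embedding $\Phi : \cplx^n \to \cplx^{n+N}$ defined by $\Phi(z) = (z, p_1(z), \dots, p_N(z))$. Its image $\Phi(P)$ sits inside a closed polydisc $D = \Delta \times \overline{\disc}^N$, where $\Delta \subset \cplx^n$ is any polydisc containing $P$, and is cut out in a neighborhood of $D$ by the polynomial equations $w_j - p_j(z) = 0$; hence $\Phi(P)$ is a closed complex submanifold of that neighborhood. The crux of the argument is now to extend $g := f \circ \Phi^{-1}$ from $\Phi(P)$ to a holomorphic function $\widetilde{g}$ on a neighborhood of $D$. Two standard routes are available: either invoke Cartan's Theorem~B on a Stein neighborhood (the Koszul complex for the $N$ defining equations provides a resolution of the ideal sheaf, whose vanishing cohomology yields the extension), or use Weil's integral formula, which represents $f$ on $\mathrm{int}(P)$ as an integral of $f$ against an explicit Cauchy-type kernel on a distinguished boundary of $P$.

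Finally, expanding $\widetilde{g}$ in its Taylor series on $D$ (respectively, expanding the Weil kernel as a geometric series in the $p_j$'s) yields polynomials in $(z,w) \in \cplx^{n+N}$ converging uniformly to $\widetilde{g}$ on $D$. Pulling back via $\Phi$ and substituting $w_j = p_j(z)$ turns each such polynomial into a polynomial in $z$ alone, producing uniform polynomial approximations to $f$ on $P$, and hence on $K$. The principal obstacle is the extension step; bypassing the Cartan--Oka sheaf-cohomological machinery in favor of Weil's explicit formula is possible but requires careful bookkeeping of the faces of $P$ together with a generic position assumption on the $p_j$'s to ensure the distinguished boundary is a genuine $n$-cycle supporting the integral representation.
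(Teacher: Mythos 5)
Your proposal is the classical Oka--Weil argument --- polynomial polyhedra obtained from polynomial convexity, the Oka graph embedding $\Phi(z)=(z,p_1(z),\dots,p_N(z))$ into a polydisc, the Oka extension step (via Cartan's Theorem B or the Weil integral), and Taylor expansion pulled back by $\Phi$ --- and it is correct in outline; the only small point to make explicit is that the polyhedron must be kept compact inside the chosen ball, e.g.\ by adjoining the rescaled coordinate functions $z_i/R$ to the defining family. Note that the paper itself offers no proof of this statement: it is quoted as a classical theorem with a reference to Stout's book, where the proof given is essentially the one you sketched.
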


\noindent Although the questions of polynomial convexity appear naturally in connections with questions in function theory, it 
is, however, very difficult to determine whether a given compact in $\cplx^n$, $n\geq 2$, is polynomially convex. For 
instance, no characterization of a finite union of pairwise disjoint polynomially convex sets is known. 
Characterization is not known even for convex compact sets. The union of two disjoint compact convex sets 
is polynomially convex, thanks to Hahn-Banach separation theorem. The union of three disjoint compact convex set is not necessarily 
polynomially convex (see Kallin \cite{K2}). This leads researchers to focus on certain families of compacts having with some geometrical  
properties in $\cplx^n$ to study the question 
of polynomial convexity. In these paper we present two families of compacts which are polynomially convex. The first one is finite union 
of disjoint closed balls with centres lying in some particular region in $\cplx^n$. Let us now make brief survey about works done about polynomial 
and rational convexity for finite union of pairwise disjoint closed balls.
In the same paper Kallin \cite{K2} showed that the union of three disjoint closed balls is polynomially convex. 
It is an open problem whether the union of four disjoint closed balls in $\cplx^n$, $n\geq 2$, is polynomially convex. The most 
general result in this direction is given 
by Khudaiberganov \cite{Kbg1}. 

\begin{result}[Khudaiberganov]\label{res-kbg}
The union of any finite number of disjoint balls in $\cplx^n$ with centres lying in $\rea^n\subset\cplx^n$ is polynomially convex. 
\end{result}

The question of rational convexity of the union of finitely many disjoint closed balls in $\cplx^n$ is studied by S. Nemirovski\u\i \cite{SN}. He proved that any finite 
union of disjoint closed balls is rationally convex using a result of Duval-Sibony \cite{DS}. 
\smallskip

In this note we report an interesting (at least to the author) 
observation proceeding along the similar argument as Khudaiberganov \cite{Kbg1} (see also \cite{ChirSmir}). Before stating 
the observation we need to recall few basic notions 
in symplectic geometry. We consider $(\cplx^n, \omega_0)$ as a symplectic manifold with the standard symplectic form 
\[
\omega_0=\sum_{j=1}^n dx_j\wedge dy_j.
\]
A linear subspace $V$ of $\cplx^n$ is said to be a Lagrangian subspace of $\cplx^n$ if 
$V=\{ u\in\cplx^n: \omega_0(u,v)=0\;\;\forall v\in V\}$. For a Lagrangian subspace $V$, it follows that for every $\theta\in\rea$,
$e^{i\theta}V:=\{e^{i\theta}v\in\cplx^n:\; v\in V\}$ is also a Lagrangian subspace.
\begin{remark}\label{rmk-kbg}
We note that if a subspace 
$V$ of $\cplx^n$ is Lagrangian, then the image under a
unitary transformation is also a Lagrangian subspace. Also there exists a unitary $T:\cplx^n\to\cplx^n$ such that 
\[
T(V)=\rea^n\subset\cplx^n.
\]  
By Result~\ref{res-kbg} we know that the union of finitely many disjoint closed balls are polynomially convex if 
the centres lie in a Lagrangian subspace of $\cplx^n$. 
\end{remark}

Our first observation is: 

\begin{theorem}\label{thm-unionballs}
Let $V$ be a Lagrangian subspace of $\cplx^n$. The union of finitely many disjoint closed balls
is polynomially 
convex if their centres lie in $\bigcup_{\theta\in [0,\pi/2]} {e^{i\theta}V}$.
\end{theorem}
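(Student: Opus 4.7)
The plan is to prove Theorem~\ref{thm-unionballs} by reducing it to Result~\ref{res-kbg} (Khudaiberganov) via a Kallin-type separation argument, closely following the strategy of \cite{Kbg1,ChirSmir}.

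First, by Remark~\ref{rmk-kbg} there is a unitary $T:\cplx^{n}\to\cplx^{n}$ with $T(V)=\rea^{n}$. Being complex-linear, $T$ preserves polynomial convexity, it preserves disjointness of balls, and $T\bigl(\bigcup_{\theta\in[0,\pi/2]}e^{i\theta}V\bigr)=\bigcup_{\theta\in[0,\pi/2]}e^{i\theta}\rea^{n}$. So we may assume $V=\rea^{n}$.

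Next I would induct on the number $m$ of distinct arguments $\theta\in[0,\pi/2]$ appearing among the centres. For $m=1$ every centre lies in a single Lagrangian subspace $e^{i\theta_{0}}\rea^{n}$, and Remark~\ref{rmk-kbg} combined with Result~\ref{res-kbg} gives polynomial convexity. For the inductive step with $m\ge 2$, pick two consecutive occurring angles $\theta_{k}<\theta_{k+1}$ and an intermediate value $\theta^{*}\in(\theta_{k},\theta_{k+1})$, and split the family of balls as $K=K_{1}\cup K_{2}$, where $K_{1}$ collects balls whose centre has angle $<\theta^{*}$ and $K_{2}$ those with angle $>\theta^{*}$. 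Each of $K_{1},K_{2}$ now involves strictly fewer distinct angles, so by induction both are polynomially convex.

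To merge $K_{1}$ and $K_{2}$ I would apply Kallin's lemma with the polynomial $P(z):=e^{-2i\theta^{*}}\sum_{j=1}^{n}z_{j}^{2}$. For a centre $c=e^{i\theta}a$ with $a\in\rea^{n}$ one computes $P(c)=e^{2i(\theta-\theta^{*})}\|a\|^{2}$, so centres of $K_{1}$ are sent strictly into the open lower half-plane and centres of $K_{2}$ strictly into the open upper half-plane. The hard part will be promoting this angular separation from the centres to the full balls: writing $z=c+w$ with $\|w\|\le r$,
\[
P(z)=P(c)+2e^{i(\theta-2\theta^{*})}\langle a,w\rangle+e^{-2i\theta^{*}}\sum_{j} w_{j}^{2},
\]
and the perturbation $P(z)-P(c)$ can have modulus as large as $2\|a\|r+r^{2}$, which for small $\|a\|$ can push $P(z)$ across the real axis and out of the half-plane determined by $P(c)$. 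The resolution I would pursue, in the spirit of \cite{Kbg1,ChirSmir}, is to exploit the ball-disjointness inequality $\|c-c'\|>r+r'$ together with the angular gap $\theta_{k+1}-\theta_{k}$ to show that $P(K_{1})$ and $P(K_{2})$ are disjoint polynomially convex subsets of $\cplx$, at worst meeting at $0$ (where $P^{-1}(0)\cap(K_{1}\cup K_{2})$ is easily seen to be polynomially convex since it is either empty or a single ball centred at the origin). With these hypotheses Kallin's lemma delivers the polynomial convexity of $K_{1}\cup K_{2}$, completing the induction.
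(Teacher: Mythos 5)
There is a genuine gap at the merge step, and it is not just a missing estimate: the separation you hope for is false. Your plan needs, for the polynomial $P(z)=e^{-2i\theta^{*}}\sum_j z_j^{2}$, that ball-disjointness plus the angular gap force $P(K_1)\cap P(K_2)\subset\{0\}$ (indeed Kallin needs the stronger statement $\hull{P(K_1)}\cap\hull{P(K_2)}\subset\{0\}$). Take in $\cplx^{2}$ the balls $\ba{B((3,0);1)}$, with centre in $\rea^{2}$ (angle $0$), and $\ba{B((0,3e^{i\theta});1)}$, with centre in $e^{i\theta}\rea^{2}$, $\theta=0.1$. They are disjoint, since the centres are $3\sqrt{2}$ apart. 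But the first image under $p(z)=z_1^2+z_2^2$ contains $\{9+v^{2}:|v|\le 1\}=\ba{D(9;1)}$ (points $(3,v)$), and the second contains $\{u^{2}+9e^{2i\theta}:|u|\le 1\}=\ba{D(9e^{2i\theta};1)}$ (points $(u,3e^{i\theta})$); since $|9e^{2i\theta}-9|=18\sin(0.1)\approx 1.8<2$, these discs overlap in an open set of points far from $0$. Multiplying by the unimodular factor $e^{-2i\theta^{*}}$ changes nothing, so condition (i) of Kallin's lemma fails for your $P$ no matter how $\theta^{*}$ is chosen in the gap: the half-plane picture holds for the \emph{centres} but not for the balls, and the "hard part" you flagged cannot be repaired by the disjointness inequality alone. (Separately, your remark that $P^{-1}(0)\cap(K_1\cup K_2)$ is "empty or a single ball centred at the origin" is off: $P^{-1}(0)$ is the quadric cone $\sum_j z_j^2=0$, so this set is a cone-slice of the balls, not a ball; it is polynomially convex for other reasons.)

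The paper avoids this trap by decomposing differently. After the unitary reduction it normalizes (by a $\cplx$-affine map) so that one of the balls is the closed unit ball $\ba{\ball}$, all the others have radii $r_j\le 1$, and disjointness forces $|a_j|-r_j>1$; the induction is on the number of balls, always splitting off the unit ball, with the affine renormalization (sending the largest remaining ball to $\ba{\ball}$) keeping the hypotheses intact. The key computation (Lemma~\ref{lem-imageball}) shows that under $p(z)=\sum_j z_j^2$ each ball with real centre and $|a|-r>1$, $r\le1$, maps into $\{\rl w>1\}$, hence a ball with centre in $e^{i\theta_j}\rea^{n}$ maps into the half-plane $H_{\theta_j}$ tangent to the unit circle; because $\theta_j\in[0,\pi/2]$, i.e.\ $2\theta_j\in[0,\pi]$, the union $\bigcup_j H_{\theta_j}$ misses $\ba{\disc}\supset p(\ba{\ball})$ \emph{and} has connected complement, which is what controls the hull of $p(K_2)$ and makes Kallin applicable. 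So the separating region for the "rest" is a union of tangent half-planes against the unit disc, not two half-planes against each other; that is the idea your angular split is missing, and it is also exactly where the hypothesis $\theta\in[0,\pi/2]$ enters.
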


We now fix some notations: $B(a;r)$ denotes the open ball in $\cplx^n$ centred 
at $a=(a_1,\dots, a_n)$ and with radius $r$, i.e., $B(a;r)=\{z\in\cplx^n: |z_1-a_1|^2+\dots+|z_n-a_n|^2<r^2\}$ 
and $\mathbb{B}$ denotes the open unit ball. Open unit disc in $\cplx$ is denoted by $\disc$.
For a compact $K\subset\cplx^n$, let $\smoo(K)$
denotes the algebra of all continuous function and $\poly(K)$  denotes the closed subalgebra of $\smoo(K)$ generated by polynomials 
in $z_1,\dots, z_n$.

The other class of compact subsets that we consider in this note are subsets lying in certain real analytic variety in $\cplx^2$ 
of the form $\left\{(z,w)\in\cplx^2:q(w)=\ba{p(z)}\right\}$, where $p$ and $q$ are two non-constant 
holomorphic polynomials in one variable.
Our next observation gives  a generalization of Minsker's theorem \cite{Minsker}(see Corollary~\ref{coro-minskertype}). 
Minsker proved that the algebra generated by $z^m$ and $\ba{z}^n$ is dense in $C(\ba{\disc})$ if $\gcd(m,n)=1$.
\smallskip

\begin{theorem}\label{thm-realvariety}
Any compact subset $K$ of $S:=\{(z,w)\in\cplx^2:q(w)=\ba{p(z)}\}$, where $p$ and $q$ are two non-constant holomorphic polynomial 
in one variable, is polynomially convex and $\poly(K)=\smoo(K)$.
\end{theorem}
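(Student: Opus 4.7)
\emph{Strategy.} The plan is to establish the stronger statement $\poly(K)=\smoo(K)$, from which polynomial convexity of $K$ is immediate: the maximal ideal space of $\poly(K)$ is $\hull{K}$, while that of $\smoo(K)$ is $K$, so the two must coincide. To prove the density, the natural tool is Bishop's antisymmetric decomposition theorem; it suffices to show that every antisymmetric set $E$ for $\poly(K)$ is a single point.

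\emph{Main reduction.} Fix such a set $E\subset K\subset S$. The key observation is the pair of identities
\[
p(z)+q(w)\;=\;p(z)+\ba{p(z)}\;=\;2\,\rl p(z),\qquad p(z)\,q(w)\;=\;|p(z)|^{2},
\]
valid on $S$: these exhibit two polynomials in $z,w$ that are real-valued on $E$, so by antisymmetry each is constant on $E$, say with values $c$ and $d$. Consequently $p(z)$ assumes on $E$ only values in the (possibly degenerate) pair $\{\alpha,\ba{\alpha}\}$, where $\alpha=\tfrac{c}{2}+i\sqrt{d-c^{2}/4}$. If $\alpha\in\rea$ we already have $p(z)\equiv\alpha$ on $E$. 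Otherwise I would split $E=E_{1}\sqcup E_{2}$ according as $p(z)=\alpha$ or $p(z)=\ba{\alpha}$, and consider the auxiliary polynomial
\[
R(z,w)\;:=\;(p(z)-\ba{\alpha})(q(w)-\alpha).
\]
Using $q(w)=\ba{p(z)}$ on $S$, a direct computation gives $R\equiv|\alpha-\ba{\alpha}|^{2}$ on $E_{1}$ and $R\equiv 0$ on $E_{2}$, so $R|_{E}$ is real-valued; antisymmetry then forces one of $E_{1},E_{2}$ to be empty. Either way $p(z)$ is constant on $E$, and the defining relation forces $q(w)$ to be constant as well.

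\emph{Finishing off.} So $E$ sits inside the finite set $\{(z,w):p(z)=\alpha_{0},\;q(w)=\ba{\alpha_{0}}\}$, of cardinality at most $(\deg p)(\deg q)$. The algebra $\poly(K)|_{E}$ is a unital complex subalgebra of $\cplx^{|E|}$, and the coordinate polynomials $z,w$ separate the points of $E$. Being a subalgebra of $\cplx^{|E|}$ it is reduced, hence isomorphic as a $\cplx$-algebra to $\cplx^{k}$ with $k$ its complex dimension; the separation property supplies $|E|$ distinct characters, so $k\geq|E|$, while $k\leq|E|$ by dimension, and thus $\poly(K)|_{E}=\cplx^{|E|}$. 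For $|E|\geq 2$ this algebra contains non-constant real functions, contradicting antisymmetry; hence $|E|=1$, and Bishop's theorem then yields $\poly(K)=\smoo(K)$. The one step requiring any genuine thought is the introduction of the polynomial $R$ to dispatch the non-real case $\alpha\neq\ba{\alpha}$; everything else is bookkeeping with Bishop's theorem and finite-dimensional semisimple algebra.
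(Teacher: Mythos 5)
Your proof is correct, but it follows a genuinely different route from the paper's. The paper argues in two steps: first it shows $S\cap B$ is polynomially convex for any closed ball $B$ by writing $S$ as the zero set of the plurisubharmonic function $\Psi(z,w)=|\ba{p(z)}-q(w)|^2$ and invoking H\"ormander's identification of the polynomial hull with the plurisubharmonic hull (Result~\ref{res-holpsh}); then it verifies, via the determinant criterion of Result~\ref{res-totreal}, that $S$ is totally real away from the finite set where $\frac{\bdy p}{\bdy z}(z)\frac{\bdy q}{\bdy w}(w)=0$, so the O'Farrell--Preskenis--Walsh theorem (Result~\ref{res-OPW}) gives $\poly(S\cap B)=\smoo(S\cap B)$, and passage to an arbitrary compact $K\subset S$ is handled by Result~\ref{res-approx}. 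You instead work directly on $K$ with Bishop's antisymmetric decomposition: the identities $p(z)+q(w)=2\rl p(z)$ and $p(z)q(w)=|p(z)|^2$, valid on $S$ and hence on $K$, make these polynomials real-valued on any set of antisymmetry $E$, hence constant there, and your auxiliary polynomial $R=(p(z)-\ba{\alpha})(q(w)-\alpha)$, which equals $|p(z)-\ba{\alpha}|^2$ on $S$, eliminates the non-real case; thus $p$, and then $q$, is constant on $E$, so $E$ lies in the finite set $\{p(z)=\alpha_0,\;q(w)=\ba{\alpha_0}\}$ (this is exactly where non-constancy of both $p$ and $q$ is used), and interpolation on a finite set forces $|E|=1$; Bishop then gives $\poly(K)=\smoo(K)$, and polynomial convexity follows since the maximal ideal space of $\poly(K)$ is $\hull{K}$ while that of $\smoo(K)$ is $K$. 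Your route is closer in spirit to Minsker's original Stone--Weierstrass argument, avoids pluripotential theory and the totally-real approximation machinery entirely, and needs no intermediate set $S\cap B$; the paper's route is more geometric, exhibits the structural fact that $S$ is totally real off a finite set, and is the sort of argument that transfers to varieties where such convenient real-valued polynomials are not available. Two harmless points you may wish to make explicit: $d\geq c^2/4$ holds automatically because $E\neq\varnothing$ and $c=2\rl p(z)$, $d=|p(z)|^2$ at any point of $E$; and the character/dimension argument for $\poly(K)|_E=\cplx^{|E|}$ can be replaced by direct Lagrange-type interpolation with polynomials on the finite set $E$.
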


\noindent If one of $p$ and $q$ is constant a compact patch 
$K=\left\{(z,w)\in\cplx^2:q(w)=\ba{p(z)}\right\}\cap \ba{B(a;r)}$ is polynomially convex 
but $\poly(K)\neq \smoo(K)$.

\section{Technical preliminaries}\label{S:technical}

In this section we mention some results from the literature that will be useful in the proof. The first one is a lemma due to Kallin \cite{K1} (see 
\cite{dP2} for a survey on the use of Kallin's lemma)

\begin{lemma}[Kallin]\label{L:kallin}
 Let $K_1$ and $K_2$ be two compact polynomially convex subsets in $\cplx ^n$.  
 Suppose further that there exists a holomorphic polynomial
$P$ satisfying the following conditions:
 \begin{enumerate}
 \item[$(i)$] $\hull{P(K_1)} \cap \hull{P(K_2)} \subset \{0\}$; and
 \item[$(ii)$] $P^{-1}\{0\} \cap (K_1 \cup K_2)$ is polynomially convex.
 \end{enumerate}
 Then $K_1 \cup K_2$ is polynomially convex.
\end{lemma}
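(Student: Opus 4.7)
My plan is to take an arbitrary point $z_0 \in \hull{K_1 \cup K_2}$ and show that $z_0 \in K_1 \cup K_2$, by tracking $\alpha := P(z_0) \in \cplx$ and using the one-variable structure on the target side to localise the hull either into the fibre $P^{-1}(0)$ (handled by (ii)) or into one of the individual polynomially convex $K_j$.

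First I record a one-variable preliminary. Setting $L_j := P(K_j)$, each $\hull{L_j}$ in $\cplx$ equals $L_j$ together with the bounded components of $\cplx\setminus L_j$. Under hypothesis (i) a short topological argument shows that $\hull{L_1}\cup\hull{L_2}$ is already polynomially convex in $\cplx$, hence $\hull{L_1\cup L_2}=\hull{L_1}\cup\hull{L_2}$. Since $P(\hull{K_1\cup K_2})\subset\hull{L_1\cup L_2}$, this places $\alpha\in\hull{L_1}\cup\hull{L_2}$.

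I would then split into two cases. If $\alpha=0$, I would prove the slicing statement $\hull{K_1\cup K_2}\cap P^{-1}(0)\subset\hull{(K_1\cup K_2)\cap P^{-1}(0)}$ by the following power trick: for a fixed polynomial $Q$ on $\cplx^n$ and small $\delta>0$, Runge on the polynomially convex compactum $((L_1\cup L_2)\cap\{|w|\ge\delta\})\cup\{0\}$ produces one-variable polynomials $\phi$ with $\phi(0)=1$ and $|\phi|<\eps$ on the far part of $L_1\cup L_2$; plugging $Q\,\phi(P)^N$ into $|F(z_0)|\le\sup_{K_1\cup K_2}|F|$ and sending $N\to\infty$ then $\delta\to 0$ yields $|Q(z_0)|\le\sup_{(K_1\cup K_2)\cap P^{-1}(0)}|Q|$. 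Hypothesis (ii) then forces $z_0\in(K_1\cup K_2)\cap P^{-1}(0)$. If $\alpha\neq 0$, then without loss of generality $\alpha\in\hull{L_1}\setminus\hull{L_2}$, and the analogous power trick with a one-variable polynomial $g$ satisfying $g(\alpha)=1$, $|g|<\eps$ on $L_2$, and $|g|\le 1+\eps$ on $L_1$ produces $|Q(z_0)|\le\sup_{K_1}|Q|$ for every polynomial $Q$, so $z_0\in\hull{K_1}=K_1$.

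I expect the main technical obstacle to be the construction of $g$ in the second case when $\hull{L_1}\cap\hull{L_2}=\{0\}$: the polynomial $g$ must vanish near $0$ on $\hull{L_2}$ while being $1$ at $\alpha\in\hull{L_1}$, which precludes naive Runge separation on disjoint compacta, and the maximum-modulus principle forbids $|g|\le 1$ on $\hull{L_1}$ simultaneously with $g(\alpha)=1$ unless $\alpha$ is suitably extremal. The resolution is to apply Mergelyan's theorem on the polynomially convex compactum $\hull{L_1}\cup\hull{L_2}$ (which has connected complement by the preliminary check), approximating by a polynomial the continuous function that is identically $0$ on $\hull{L_2}$ and that on $\hull{L_1}$ coincides with an auxiliary polynomial $h$ satisfying $h(0)=0$, $h(\alpha)=1$, and whose supremum on $\hull{L_1}$ is made as close to $1$ as the position of $\alpha$ permits; continuity at the common point $0$ is automatic and holomorphy on the interior is built in, so Mergelyan delivers the required $g$.
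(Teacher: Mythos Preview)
The paper does not prove this lemma: it is quoted as a technical preliminary from the literature (Kallin \cite{K1}; see \cite{dP2} for a survey of its uses). There is therefore no ``paper's own proof'' to compare your attempt against, and I comment only on its soundness.

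Your overall strategy is the standard one, and Case~1 together with the one-variable preliminary are along the right lines. The genuine gap is in Case~2 in the touching situation $\hull{L_1}\cap\hull{L_2}=\{0\}$. Your argument needs, for every $\eps>0$, a polynomial $g$ with $g(\alpha)=1$, $|g|<\eps$ on $L_2$, and $|g|\le 1+\eps$ on $L_1$. When $0$ lies in $\hull{L_1}$ (as it does in the touching case) and $\alpha$ is interior to $\hull{L_1}$, no such $g$ exists for small $\eps$. Concretely, take $L_1=\ba{\disc}$, $L_2=\{0\}\cup\{w:|w-10|\le 1\}$, $\alpha=\tfrac12$: then $|g(0)|<\eps$ together with $\|g\|_{\ba\disc}\le 1+\eps$ force, by Schwarz--Pick applied to $g/(1+\eps)$, that $|g(\tfrac12)|$ stays bounded well below $1$. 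Your Mergelyan resolution inherits the same obstruction: any auxiliary $h$ with $h(0)=0$, $h(\alpha)=1$ satisfies $\|h\|_{\hull{L_1}}\ge M$ for a constant $M>1$ depending only on the positions of $0$ and $\alpha$ in $\hull{L_1}$ (again by Schwarz), so after approximation and rescaling you can only achieve $\|g\|_{L_1}\approx M$, and the power trick yields merely $|Q(z_0)|\le \|Q\|_{K_1}M^{N}$, which is useless. Your own phrase ``as close to $1$ as the position of $\alpha$ permits'' already concedes that this bound need not be close to $1$, and the argument does not survive that concession. The proofs in the references the paper cites avoid this obstruction by working with representing measures for $z_0$ rather than with peaking-type polynomials.
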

 Next, we mention a basic but nontrivial result from H\"{o}rmander's book \cite{Hor}.
 \begin{result}\cite[Theorem~4.3.4]{Hor}\label{res-holpsh}
 Let $K$ be a compact subset of a pseudoconvex domain $\Omega$ in $\cplx^n$. Then $\hull{K}_\Omega =\hull{K}^P_\Omega$, where 
 $\hull{K}_\Omega =\left\{z\in\Omega: |f(z)|\leq \sup_{w\in K}|f(w)|\;\forall f\in\hol(\Omega)\right\}$ and 
 $\hull{K}^P_\Omega=\left\{ z\in\Omega: u(z)\leq \sup_{w\in K}u(w)\; \forall u\in{\sf psh}(\Omega)\right\}$. 
 \end{result}
 We note that, when $\Omega=\cplx^n$, Result~\ref{res-holpsh} gives us that the polynomially convex hull $\hull{K}$ is equal to the 
 plurisubharmonically convex hull $\hull{K}^P$. It plays a vital role in our proof of Theorem~\ref{thm-realvariety}. The main idea behind 
 our proof of approximation part of Theorem~\ref{thm-realvariety} is to look at the points where the set $S$ is totally real. 
 A real submanifold $M$ of $\cplx^n$ is said to be {\em totally real} at $p\in M$ if $T_pM\cap iT_pM=\{0\}$, where 
$T_pM$ denotes the tangent space of $M$ at $p$ viewed as a subspace in $\cplx^n$. A real submanifold $M$ is said to be {\em totally real} if it 
is totally real at every point $p\in M$.  Following 
 result from \cite{SG4} gives a characterization of a level set of certain 
 map from $\cplx^n$ to $\rea^n$ to be totally real. 
 
 \begin{result}\cite[Lemma~2.5]{SG4}\label{res-totreal}
 Let $\rho_1,\dots, \rho_n$ be real valued functions so that $\rho:=(\rho_1,\dots,\rho_n):\cplx^n\to\rea^n$ is a 
 submersion. The level set $S:=\{z\in\cplx^n: \rho(z)=0\}$ is totally real at a point $p\in S$ if and only if $\det A_p\neq 0$, where 
\[A_p= \begin{pmatrix}
          \dfrac{\bdy \rho_1}{\bdy \ba{z_1}}(p)\;\; \dots\;\; \dfrac{\bdy \rho_1}{\bdy \ba{z_n}}(p)\\
          \dfrac{\bdy \rho_2}{\bdy \ba{z_1}}(p)\;\; \dots \;\;\dfrac{\bdy \rho_2}{\bdy \ba{z_n}}(p)\\
          \vdots\\                                                 
          \dfrac{\bdy \rho_n}{\bdy \ba{z_1}}(p)\;\; \dots \;\;\dfrac{\bdy \rho_n}{\bdy \ba{z_n}}(p)
          \end{pmatrix}
          \]
         \end{result}
 
 \noindent  It is well-known
that any totally-real submanifold in $\cplx^n$ is locally polynomially convex at every point (see  \cite{Wermer1}, \cite{HW}) 
i.e., for each $p\in M$ there exists a ball $B(p;r)$ such that $M\cap \ba{B(p;r)}$ 
is polynomially convex.
We now mention the following approximation result due to O'Farrell, Preskenis and Walsh \cite{OPW} 
for compact sets that are locally contained in totally-real submanifolds of $\cplx^n$.

\begin{result}[O'Farrell-Preskenis-Walsh]\label{res-OPW}
Let $K\subset\cplx^n$ be a compact polynomially convex subset of $\cplx^n$ and $E\subset K$ be such that 
$K\setminus E$ is locally contained in totally-real submanifolds of $\cplx^n$. Then 
\[
\poly(K)=\left\{f\in\smoo(K): f|_E\in\poly(E)\right\}.
\]
\end{result}
Next, we mention another approximation result that will be useful in our proof of Theorem~\ref{thm-realvariety}.
 \begin{result}\cite[Lemma 2.3]{SG4}\label{res-approx}
 Let $K$ be a compact subset of $\cplx^n$ such that $\poly(K)=\smoo(K)$. Then any closed 
 subset $L$ of $K$ is polynomially convex and $\poly(L)=\smoo(L)$.
 \end{result}

\section{Union of balls}\label{sec-unionballs}
Our aim in this section is to prove Theorem~\ref{thm-unionballs}. Before going into the proof
we state and prove a lemma about the image of a ball centred at $\rea^n\subset\cplx^n$ under the polynomial 
$p(z_1,\dots, z_n)=\sum_{j=1}^nz_j^2$. This will play a very crucial role in our proof of Theorem~\ref{thm-unionballs}.
\begin{lemma}\label{lem-imageball}
Let $a\in\rea^n$ and $0\leq r\leq1$ be such that $|a|-r>1$. Then the image of the closed ball $\ba{B(a,r)}$ under 
the polynomial $p(z_1,\dots,z_n)=\sum_{j=1}^nz_j^2$ 
lies in the affine half-space $\{w\in\cplx:\rl w>1\}$. 
\end{lemma}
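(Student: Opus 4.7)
The plan is to reduce the claim to a one-variable optimization problem. Write $z = u + iv$ with $u, v \in \rea^n$. Since $a \in \rea^n$, membership $z \in \ba{B(a;r)}$ is equivalent to the real constraint $|u - a|^2 + |v|^2 \leq r^2$. A direct expansion of $\sum_j (u_j + iv_j)^2$ gives
\[
\rl p(z) \;=\; |u|^2 - |v|^2,
\]
so the task is to show $|u|^2 - |v|^2 > 1$ under the above constraint.

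For fixed $t = |v| \in [0,r]$ the constraint forces $|u - a| \leq \sqrt{r^2 - t^2}$, so by the reverse triangle inequality
\[
|u| \;\geq\; |a| - \sqrt{r^2 - t^2},
\]
which is positive since $|a| > 1 + r \geq r \geq \sqrt{r^2 - t^2}$. Consequently,
\[
\rl p(z) \;\geq\; \varphi(t) := \bigl(|a| - \sqrt{r^2 - t^2}\bigr)^2 - t^2.
\]
I would then establish that $\varphi$ attains its minimum on $[0,r]$ at the endpoint $t = 0$; since $\varphi(0) = (|a|-r)^2 > 1$, this finishes the proof.

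A short computation gives $\varphi'(t) = 2t\bigl(|a|/\sqrt{r^2-t^2} - 2\bigr)$. The sign of $\varphi'$ on $(0,r)$ is controlled by whether $|a|$ exceeds $2\sqrt{r^2-t^2}$. This is exactly where the two hypotheses of the lemma interact in a clean way: from $|a| > 1 + r$ and $r \leq 1$ one deduces $|a| > 1 + r \geq 2r \geq 2\sqrt{r^2 - t^2}$ for every $t \in [0,r]$. Hence $\varphi' \geq 0$ on $[0,r]$, so $\varphi$ is nondecreasing and $\min \varphi = \varphi(0) = (|a|-r)^2 > 1$, as required.

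I do not anticipate any serious obstacle here; the only subtle point is the observation that the hypothesis $r \leq 1$ combined with $|a| > 1+r$ forces $|a| > 2r$, which is precisely what prevents the critical point of $\varphi$ from entering the feasible interval and thereby keeps the minimum of $\rl p$ on the ball at the boundary point nearest the origin.
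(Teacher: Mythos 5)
Your proof is correct and follows essentially the same route as the paper: both reduce $\rl p(z)=|u|^2-|v|^2$ under the ball constraint to a one-variable minimization whose minimum value is $(|a|-r)^2>1$, with the hypotheses $r\leq 1$ and $|a|-r>1$ entering precisely through the inequality $|a|>2r$ that keeps the critical point outside the feasible range. The only cosmetic difference is that you parametrize by $t=|v|$, whereas the paper bounds $|y|^2\leq r^2-|x-a|^2$ and studies the quadratic $\varphi(t)=2t^2-2t|a|+|a|^2-r^2$ in $t=|x|\geq |a|-r$; both yield the same bound.
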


\begin{proof}
Let $z\in \ba{B(a,r)}$, where $a\in\rea^n$.
Writing $z=x+iy$, $x,y\in\rea^n$, we get that
\begin{equation}\label{eq-realcentre}
|x-a|^2+ |y|^2 \leq r^2.
\end{equation}
For all $z\in \ba{B(a,r)}$ we obtain that
\begin{align*}
\rl p(z) &=|x|^2-|y|^2\\
&\geq |x|^2-r^2+|x-a|^2 \quad\text{(using Equation~\eqref{eq-realcentre})}\\
&= |x|^2-r^2+|x|^2-2\langle x, a\rangle +|a|^2\\
&\geq 2|x|^2-2|x||a|+|a|^2-r^2.
\end{align*}

We now consider the function $\varphi(t)=2t^2-2t|a|+|a|^2-r^2$. 
The function $\varphi(t)$ has a minimum at $t=\dfrac{|a|}{2}$ and is increasing for $t>\dfrac{|a|}{2}$. 
Since, by assumption, $|a|-r>1$ and $0\leq r\leq 1$, we get that 
$r<|a|/2$. This implies that $\dfrac{|a|}{2}<|a|-r$. Therefore, 
for all $t\geq|a|-r$, 
\begin{align}
\varphi(t) &\geq \varphi(|a|-r)\notag\\
& =2(|a|-r)^2-2(|a|-r)|a|+|a|^2-r^2\notag\\
&= |a|^2-2r|a|+r^2\notag\\
&=(|a|-r)^2.\label{eq-phincrease}
\end{align}

For $z\in \ba{B(a,r)}$, $z=x+iy$, we have $|x|\geq |a|-r$.
Hence, in view of Equation~\eqref{eq-phincrease}, we obtain that
\begin{align*}
\rl p(z) &=\varphi(|x|)\\
&\geq \varphi(|a|-r) >1\;\;\forall z\in\ba{B(a,r)}.
\end{align*}
Hence, 
 \[
 p(\ba{B(a;r)})\subset \{w\in\cplx:\rl w>1\}.
\]
\end{proof}

In this section we provide a proof of Theorem~\ref{thm-unionballs}. The main idea behind the proof is due to Khudaiberganov \cite{Kbg1} (see 
also \cite{ChirSmir})

\begin{proof}[Proof of Theorem~\ref{thm-unionballs}]

Since $V$ is a Lagrangian subspace of $\cplx^n$, there exists a unitary transformation $T:\cplx^n\to\cplx^n$ such that $T(V)=\rea^n$. $\cplx$-linearity of 
$T$ gives us
$ T(\lambda V)=\lambda \rea^n$ for all $\lambda\in\cplx$; in particular,
\[
T(e^{i\theta} V)=e^{i\theta}\rea^n.
\]
Since unitary transformations of $\cplx^2$ maps balls to balls, it is enough to consider the disjoint closed balls with centres lying in $\bigcup_{\theta\in[0,\pi/2]} e^{i\theta}\rea^n$.
Without loss of generality we assume that the closed disjoint balls are as follows: $\ba{\mathbb{B}}$, the closed unit ball, and
$\ba{B(a_j;r_j)}$ such that 
$a_j\in \bigcup_{\theta\in[0,\pi/2]} e^{i\theta}\rea^n$ and $0\leq r_j\leq1$, $j=1,\dots, N.$ 
Since the closed balls are pairwise disjoint, we note that 
\begin{equation}\label{eq-centres}
|a_j|-r_j>1\;\;\forall j=1,\dots, N.
\end{equation}


We show that $\ba{\ball}\cup\left(\bigcup_{j=1}^N\ba{B(a_j;r_j)}\right)$ is polynomially convex. We will use the induction on $N$ for that. 
For $N=1$, clearly, $\ba{\ball}\cup \ba{B(a_1;r_1)}$ is polynomially convex for any ball $B(a_1;r_1)$
with $a_1\in \bigcup_{\theta\in[0,\pi/2]} e^{i\theta}\rea^n$ and $\ba{\ball}\cap\ba{B(a_1;r_1)}=\varnothing$.
 As the induction hypothesis we assume that the union $\ba{\ball}\cup\left(\bigcup_{j=1}^{N-1}\ba{B(\al_j;r_j)}\right)$ of 
 $N$ pairwise disjoint closed balls, one of them being  
the closed unit ball and the others being any $(N-1)$ pairwise disjoint balls with centres 
$\al_j\in\bigcup_{\theta\in[0,\pi/2]}e^{i\theta}\rea^n$ and radii $ r_j\leq 1$, 
is polynomially convex.

Assume the compact sets $K_1:=\ba{\ball}$ and $K_2:=\bigcup_{j=1}^N\ba{B(a_j;r_j)}$. Since $K_2$ is a union 
of $N-1$ disjoint balls with centres in $\bigcup_{\theta\in[0,\pi/2]}e^{i\theta}\rea^n$. Without loss of generality assume that 
$r_N\geq r_j$, $j=1,\dots, (N-1)$.
There exists an invertible $\cplx$-affine transformation $S$ on $\cplx^n$ of the form
\[
S(z)=\mu(z+b),
\]
where $\mu, b\in\cplx$,
such that 
\[
S(\ba{B(a_N;r_N)})=\ba{\ball} \;\text{and}\; S(\ba{B(a_j;r_j)})=\ba{B(c_j;s_j)},
\]
where $c_j\in\bigcup_{\theta\in[0,\pi/2]}e^{i\theta}\rea^n$ and $0\leq s_j\leq 1$ for all $j=1,\dots, (N-1)$. 
We also have $|c_j|-s_j>1$ for all $j=1,\dots, N-1$.
By induction hypothesis, 
$\ba{\ball}\cup\left(\bigcup_{j=1}^{N-1} \ba{B(c_j;s_j)}\right)$ is polynomially convex. Hence, $K_2$ is polynomially convex.

We now use Kallin's lemma (Lemma~\ref{L:kallin}) with the polynomial 
\[
p(z_1,\dots,z_n)= z_1^2+\dots+z_n^2.
\]
to show $K_1\cup K_2$ is polynomially convex. 
 Clearly,
\begin{equation}\label{eq-imageunit}
|p(z)|\leq 1\quad \forall z\in K_1.
\end{equation}
Since $a_j\in \bigcup_{\theta\in[0,\pi/2]}e^{i\theta}\rea^n$, we assume that
$a_j=e^{i\theta_j}b_j$, where $b_j\in\rea^n$ and $\theta_j\in[0,\pi/2]$ for all $j=1,\dots, N$. 
We first fix a $j_0: 1\leq j_0\leq N$.
Corresponding to $j_0$ we consider a 
unitary map $T_{j_0}:\cplx^n\to\cplx^n$ defined by 
\[
T_{j_0}(z)=e^{i\theta_{j_0}}z.
\]
Clearly, 
$T_{j_0}(b_{j_0})=a_{j_0}$ and 
$
T_{j_0}(\ba{B(b_{j_0}; r_{j_0})})=\ba{B(a_{j_0};r_{j_0})}.
$
In view of Lemma~\ref{lem-imageball}, we obtain that
\[
\rl p(z)>1 \quad\forall z\in \ba{B(b_{j_0};r_{j_0})}.
\]
Since $p$ is a homogeneous holomorphic polynomial of degree two, we get 
\[
p(T_{j_0}(z))=e^{2i\theta_{j_0}}p(z)\quad\forall z\in \ba{B(b_{j_0};r_{j_0})}.
\]
Hence, we get that 
\[
\rl \left(e^{-2i\theta_{j_0}}p(z)\right)>1 \quad \forall z\in \ba{B(a_{j_0}; r_{j_0})}.
\]
Therefore, the image of $\ba{B(a_{j_0}, r_{j_0})}$ under the polynomial $p$ lies in the half plane
\[
\left\{w\in\cplx: \rl\left(e^{-2i\theta_{j_0}}w\right)>1 \right\}.
\]
Since we have chosen $j_0$ arbitrarily, hence, for each $j=1,\dots, N$, we obtain that 
\[
p(\ba{B(a_j, r_j)})\subset \left\{w\in\cplx: \rl\left(e^{-2i\theta_j}w\right)>1 \right\}=:H_{\theta_j}.
\]
Writing $w=u+iv$ in $\cplx$, we get the half space as 
\[
H_{\theta_j}=\left\{u+iv\in\cplx: u\cos{2\theta_j}+v\sin{2\theta_j}>1 \right\}.
\]
Since the boundary line of $H_{\theta_j}$ is tangent to the unit circle, $H_{\theta_j}\cap \ba{\disc}=\varnothing$. 

We get the image of $K_2$ under the polynomial $p$
\[
p(K_2)\subset \bigcup_{j=1}^N H_{\theta_j}. 
\]
We also obtain that 
\begin{equation}\label{eq-intersectkallin}
\left(\bigcup_{j=1}^N H_{\theta_j}\right)\cap \ba{\disc}=\varnothing.
\end{equation}

We note that 
\[
H_0=\{u+iv\in\cplx: u>1\}\;\; \text{and}\;\; H_{\pi/2}=\{u+iv\in\cplx: u<-1\}, 
\]
and $H_{\theta_j}\subset \{u+iv\cplx: v>0, u^2+v^2>1\}\cup H_0\cup H_{\pi/2}$ for all $j=1,\dots, N$.
Hence, the 
strip $\{u+iv\in\cplx: -1\leq u \leq 1, v\leq 0\}$ does not intersect the union of half spaces $\left(\bigcup_{j=1}^N H_{\theta_j}\right)$. 
Hence, 
we get that $\cplx\setminus \left(\bigcup_{j=1}^N H_{\theta_j}\right)$ is connected. Therefore, in 
view of Equations \eqref{eq-imageunit} and \eqref{eq-intersectkallin}, we conclude 
\[
\hull{p(K_1)}\cap \hull{p(K_2)}=\varnothing.
\] 
All the conditions of Kallin's lemma are satisfied with 
the above polynomial $p$. Hence, $K_1\cup K_2=\bigcup_{j=0}^NB_j$ is polynomially convex. 
\end{proof}

\section{Compact subsets of certain real analytic variety}\label{sec-realvar}
In this section we provide a proof of Theorem~\ref{thm-realvariety}. The idea is to construct a non-negative plurisubharmonic function on 
$\cplx^n$ such that the set $S$ lies on the zero set of that function. 
\begin{proof}[Proof of Theorem~\ref{thm-realvariety}] 
 Let $B$ be a closed ball in $\cplx^2$. If $S\cap B=\varnothing$, then there is nothing to prove.
Therefore, assume $S\cap B\neq \varnothing$. We divide the proof into two steps.
First we show that $S\cap B$ is polynomially convex. In the second step 
we show that any compact subset $K$ of $S$ is polynomially convex and $\poly(K)=\smoo(K)$.
\smallskip

\noindent {\em Step I: To show $S\cap B$ is polynomially convex.}

\noindent Consider the function $\Psi:\cplx^2\to\rea$ defined by
\[
\Psi(z,w)=|\ba{p(z)}-q(w)|^2.
\]
Clearly, $S=\Psi^{-1}\{0\}$.
\smallskip

A simple computation gives us 
\begin{align*}
\dfrac{\bdy^2\Psi}{\bdy z\bdy \ba{z}}(z,w)&= \left|\dfrac{\bdy p}{\bdy z}(z)\right|^2\\
\dfrac{\bdy^2\Psi}{\bdy z\bdy \ba{w}}(z,w)&= 0=\dfrac{\bdy^2\Psi}{\bdy w\bdy \ba{z}}(z,w)\\
\dfrac{\bdy^2\Psi}{\bdy w\bdy \ba{w}}(z,w)&= \left|\dfrac{\bdy q}{\bdy w}(w)\right|^2.
\end{align*}

The Levi-form of $\Psi$:
\begin{align*}
\mathscr{L}\Psi((z,w); (u,v))
&=\left|\dfrac{\bdy P}{\bdy z}(z)\right|^2|u|^2 +\left|\dfrac{\bdy q}{\bdy w}(w)\right|^2|v|^2\\
&\geq 0 \quad\forall (u,v)\in\cplx^2.
\end{align*}
Therefore, $\Psi$ is plurisubharmonic in $\cplx^2$. Hence, $S\cap B$ is plurisubharmonically convex. 
In view of Result~\ref{res-holpsh}, $S\cap B$ is polynomially convex.
\smallskip

\noindent {\em Step II: To show any compact subset $K\subset S$ is polynomially convex and $\poly(K)=\smoo(K)$.}

\noindent The main insight here is to show that off a very small set $S$ is totally real. In this case we show that there 
is a finite set $E\subset S$ such that $S\setminus E$ is locally contained in totally real submanifold of $\cplx^2$. 
We will use Result~\ref{res-totreal} for that. 
In this case the defining function $\rho$ is 
\[
\rho(z,w)=(\rho_1(z,w), \rho_2(z,w)),
\]
 where 
\[
\rho_1(z,w):=\rl(p(z)-q(w))\quad\text{and}\quad \rho_2(z,w):=\imag(-p(z)-q(w))
\]
Let $(z_0,w_0)\in S$.
The matrix 
\begin{align*}
A_{(z_0,w_0)} &= \begin{pmatrix}
\dfrac{\bdy \rho_1}{\bdy \ba{z}}(z_0,w_0)\quad  \dfrac{\bdy \rho_1}{\bdy \ba{w}}(z_0,w_0)\\
&{}\\
\dfrac{\bdy \rho_2}{\bdy \ba{z}}(z_0,w_0)\quad \dfrac{\bdy \rho_1}{\bdy \ba{w}}(z_0,w_0).
\end{pmatrix}\\
&{}\\
&= \begin{pmatrix} 
\dfrac{1}{2}\ba{\dfrac{\bdy p}{\bdy z}(z_0)}\quad -\dfrac{1}{2}\ba{\dfrac{\bdy q}{\bdy w}}(w_0)\\
&{}\\
-\dfrac{i}{2}\ba{\dfrac{\bdy p}{\bdy z}(z_0)}\quad -\dfrac{i}{2}\ba{\dfrac{\bdy q}{\bdy w}(w_0)}
\end{pmatrix}
\end{align*}
  We obtain that 
  $\det A_{(z_0,w_0)}=0$ if and only if $\dfrac{\bdy p}{\bdy z}(z_0)\dfrac{\bdy q}{\bdy w}(w_0)=0$. 
  Consider the set 
  \[
  Z=\left\{(z_0,w_0)\in \cplx^2: \;\; q(w_0)=\ba{p(z_0)},\;\dfrac{\bdy p}{\bdy z}(z_0)\dfrac{\bdy q}{\bdy w}(w_0)=0\right\}=:Z_1\cup Z_2, 
  \]
  where 
  \begin{align*}
  Z_1 &:=\left\{(z_0,w_0)\in\cplx^2:\;\; q(w_0)=\ba{p(z_0)},\; \dfrac{\bdy p}{\bdy z}(z_0)=0\right\}\\
  Z_2 &:=\left\{(z_0,w_0)\in\cplx^2:\;\; q(w_0)=\ba{p(z_0)},\; \dfrac{\bdy q}{\bdy w}(w_0)=0\right\}.
  \end{align*}
  Since $p$ and $q$ are non-constant holomorphic polynomials, the holomorphic polynomials 
  $\dfrac{\bdy p}{\bdy z}$ and $\dfrac{\bdy q}{\bdy w}$ are not identically zero.
  $\det A_{(z_0,w_0)} \neq 0$ gives us that $\rho$ is locally a submersion at $(z_0,w_0)$.
  Hence, both the sets 
  $Z_1$ and $Z_2$ are finite sets. Hence, by Result~\ref{res-totreal}, $S\setminus Z$ is locally contained in totally-real submanifold. 
  
  Let $K$ be any compact subset of $S$. There exists a closed ball $B$ in $\cplx^n$ such that 
  \[
  K\subset S\cap B.
  \]
  Since $S$ is totally-real except finitely many points, in view of Result~\ref{res-OPW}, we obtain that 
  \[
  \poly(S\cap B)=\smoo(S\cap B).
  \]
  Hence, by Result~\ref{res-approx}, we get that $K$ is polynomially convex and $\poly(K)=\smoo(K)$.
                       
\end{proof}


\begin{corollary}\label{coro-minskertype}
The algebra generated by $z^m$ and $\ba{z}^n$, $m,n\in\mathbb{N}$ is dense in $\smoo(\ba{\mathbb{D}})$.
\end{corollary}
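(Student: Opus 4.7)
The plan is to deduce the corollary from Theorem~\ref{thm-realvariety} by realizing $\ba{\disc}$ as a compact subset of a suitably chosen variety $S \subset \cplx^2$ and transferring the approximation statement back via a homeomorphism. As in Minsker's original theorem, the statement as written really requires $\gcd(m,n)=1$, which I will treat as an implicit hypothesis; without it, polynomials in $z^m$ and $\ba z^n$ are invariant under $z \mapsto \zeta z$ for every $d$th root of unity $\zeta$ (where $d = \gcd(m,n)$), so density in $\smoo(\ba{\disc})$ fails.

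Specifically, take $p(\zt_1) = \zt_1^n$ and $q(\zt_2) = \zt_2^m$. Then the variety of Theorem~\ref{thm-realvariety} is
\[
S = \bigl\{(\zt_1,\zt_2)\in\cplx^2 : \zt_2^m = \ba{\zt_1}^n \bigr\}.
\]
Define $\Phi : \ba{\disc} \to \cplx^2$ by $\Phi(\zt) = (\zt^m, \ba{\zt}^n)$. A direct computation gives
\[
q(\ba{\zt}^n) = \ba{\zt}^{mn} = \ba{\zt^{mn}} = \ba{p(\zt^m)},
\]
so $K := \Phi(\ba{\disc})$ is a compact subset of $S$. Theorem~\ref{thm-realvariety} then yields $\poly(K) = \smoo(K)$.

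Next I would verify that $\Phi$ is a homeomorphism onto $K$. Continuity is immediate; for injectivity, $\Phi(\zt) = \Phi(\eta)$ forces $|\zt|=|\eta|$, and if this common value is nonzero then writing $\eta = \zt e^{i\theta}$ gives $e^{im\theta} = e^{-in\theta} = 1$, so $m\theta, n\theta \in 2\pi\zahl$. Bézout together with $\gcd(m,n)=1$ then forces $\theta \in 2\pi\zahl$, i.e., $\zt = \eta$. Continuity of $\Phi^{-1}$ is automatic from compactness of $\ba{\disc}$. This is the one nontrivial check in the argument; everything else is formal.

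Finally, given $g \in \smoo(\ba{\disc})$, set $f := g \circ \Phi^{-1} \in \smoo(K)$. By the previous paragraph there exist polynomials $P_k \in \cplx[\zt_1, \zt_2]$ with $\|P_k - f\|_K \to 0$. Pulling back by $\Phi$ yields
\[
\sup_{\zt \in \ba{\disc}} \bigl| P_k(\zt^m, \ba{\zt}^n) - g(\zt) \bigr| = \|P_k - f\|_K \longrightarrow 0,
\]
and each $P_k(\zt^m, \ba{\zt}^n)$ is a polynomial in $\zt^m$ and $\ba{\zt}^n$, completing the argument.
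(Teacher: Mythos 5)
Your proof is correct and follows the same route as the paper: the same variety $S=\{(z,w)\in\cplx^2: w^m=\ba{z}^n\}$ (that is, $p(z)=z^n$, $q(w)=w^m$), the same compact $K=\{(z^m,\ba{z}^n): z\in\ba{\disc}\}\subset S$, and the same appeal to Theorem~\ref{thm-realvariety} to obtain $\poly(K)=\smoo(K)$. What you add is the transfer back to $\ba{\disc}$, which the paper's proof leaves entirely implicit: density of the algebra generated by $z^m$ and $\ba{z}^n$ in $\smoo(\ba{\disc})$ requires that every $g\in\smoo(\ba{\disc})$ factor as $f\circ\Phi$ with $\Phi(z)=(z^m,\ba{z}^n)$, i.e., that $\Phi$ be injective (hence, by compactness, a homeomorphism onto $K$), and this is exactly where $\gcd(m,n)=1$ enters, via your B\'ezout argument. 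You are also right that this hypothesis is not cosmetic: if $d=\gcd(m,n)>1$, then every polynomial in $z^m$ and $\ba{z}^n$, and hence every uniform limit of such polynomials, is invariant under $z\mapsto e^{2\pi i/d}z$, so for instance $g(z)=z$ cannot be approximated and the corollary as printed (for arbitrary $m,n\in\mathbb{N}$) fails; only the conclusion $\poly(K)=\smoo(K)$ holds for all $m,n$. In short, your write-up is the correct completion of the paper's sketch: same approach, with the missing injectivity/pullback step supplied and the necessary coprimality hypothesis made explicit, which recovers precisely Minsker's theorem.
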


\begin{proof}
Let $K:=\{(z^m,\ba{z}^n)\in\cplx^2: z\in\ba{\disc}\}$. We wish to show $\poly(K)=\smoo(K)$.
Consider the set 
\[
S:=\{(z,w)\in\cplx^2: w^m=\ba{z}^n\}.
\]

Clearly, $K$ is a compact subset of $S$. By using Theorem~\ref{thm-realvariety}, we get that $K$ is 
polynomially convex and $\poly(K)=\smoo(K)$.
\end{proof}
\begin{remark}
A special case, when $\gcd(m,n)=1$, of Corollary~\ref{coro-minskertype} gives us Minsker's theorem \cite{Minsker}.
\end{remark}

\end{document}